\newtheorem{thm}{Theorem}[section]
\newtheorem{prop}[thm]{Proposition}
\newtheorem{defn}[thm]{Definition}
\theoremstyle{definition}
\theoremstyle{remark}
\newcommand{\hsforall}{\hspace{1mm}\forall\hspace{1mm}}						 
\renewcommand{\Re}{\operatorname*{Re}}                             
\renewcommand{\Im}{\operatorname*{Im}}                             
\renewcommand{\d}{\ensuremath{\,\mathrm{d}}}							         
\renewcommand{\geq}{\geqslant}                                     
\renewcommand{\leq}{\leqslant}                                     
\newcommand{\BE}{\begin{equation}}                                 
\newcommand{\EE}{\end{equation}}                                   
\newcommand{\BES}{\begin{equation*}}                               
\newcommand{\EES}{\end{equation*}}                                 
\newcommand{\BP}{\begin{pmatrix}}                                  
\newcommand{\EP}{\end{pmatrix}}                                    
\newcommand{\R}{\mathbb{R}}                                        
\newcommand{\C}{\mathbb{C}}                                        
\def\clap#1{\hbox to 0pt{\hss#1\hss}}
\numberwithin{equation}{section}
\title{The unified transform method for linear initial-boundary value problems: a spectral interpretation}
\author{D. A. Smith\\
\footnotesize Department of Mathematical Sciences, University of Cincinnati, OH \\
\footnotesize email\textup{: \texttt{david.smith2@uc.edu}}
}
\begin{document}
\maketitle

\abstract{It is known that the unified transform method may be used to solve any well-posed initial-boundary value problem for a linear constant-coefficient evolution equation on the finite interval or the half-line. In contrast, classical methods such as Fourier series and transform techniques may only be used to solve certain problems. The solution representation obtained by such a classical method is known to be an expansion in the eigenfunctions or generalised eigenfunctions of the self-adjoint ordinary differential operator associated with the spatial part of the initial-boundary value problem. In this work, we emphasise that the unified transform method may be viewed as the natural extension of Fourier transform techniques for non-self-adjoint operators. Moreover, we investigate the spectral meaning of the transform pair used in the new method; we discuss the recent definition of a new class of spectral functionals and show how it permits the diagonalisation of certain non-self-adjoint spatial differential operators.}

\section{Introduction} \label{sec:Intro}

Consider the Dirichlet problem for the heat equation on a finite interval,
\begin{align*}
(\partial_t-\partial_{xx})q(x,t) &= 0, & (x,t) &\in (0,1)\times(0,T) \\
q(x,0) &= q_0(x), & x &\in [0,1] \\
q(0,t) &= q(1,t) = 0, & t &\in [0,T],
\end{align*}
where $q_0$ is some known initial datum, compatible with the boundary conditions. In solving this problem by separation of variables, one begins with the assumption that $q(x,t)=\xi(x)\tau(t)$ but rapidly discovers that this ansatz is incompatible with general initial data, so replaces it with the more general assumption that $q$ can be expressed as a convergent series of such functions $\xi$, with corresponding coefficients $\tau$. Of course, these $\xi$ are precisely the eigenfunctions of a certain self-adjoint differential operator,
\begin{align*}
S &: \{f\in C^\infty[0,1]:f(0)=f(1)=0\}\to C^\infty[0,1] & Sf=-\frac{\d^2 f}{\d x^2},
\end{align*}
so it is no surprise that it is possible to write the solution in such a form. Crucially, it is the general results on completeness of eigenfunctions and convergence of eigenfunction series expansions for such operators that ensure the series representation is valid.

However, it is easy to construct a similar problem, for which the corresponding differential operator is non-self-adjoint. Consider \\
\noindent{\bf Problem~1}
\begin{subequations} \label{eqn:problem.LKdV.Uncoupled.FI}
\begin{align}
(\partial_t+\partial_{xxx})q(x,t) &= 0, & (x,t) &\in (0,1)\times(0,T) \\
q(x,0) &= q_0(x), & x &\in [0,1] \\
q(0,t) &= q(1,t) = q_x(1,t) = 0, & t &\in [0,T],
\end{align}
\end{subequations}
for which the corresponding operator
\begin{align} \label{eqn:S.FI}
S &: \{f\in C^\infty[0,1]:f(0)=f(1)=f'(1)=0\}\to C^\infty[0,1] & Sf=-i\frac{\d^3 f}{\d x^3}
\end{align}
is formally self-adjoint, but is non-self-adjoint because of its domain. In this case, one cannot rely on the classical theorems for the convergence of the eigenfunction expansions in the space of initial data.

Indeed, it turns out that while the (generalised) eigenfunctions do form a complete system~\cite{Shk1976a}, the corresponding expansions do not converge~\cite{Jac1915a} except in very special cases~\cite{Hop1919a}, so an ansatz such as the classical separation of variables series representation cannot be valid. Nevertheless, the unified transform method can be used to obtain a solution to problem~\eqref{eqn:problem.LKdV.Uncoupled.FI}. In this work, we describe how the solution representation thence derived can be interpreted as an expansion in certain spectral objects, which are defined as an abstraction of eigenfunctions and the generalized eigenfunctions of Gel'fand.

\subsection{Initial-boundary value problems}

In this work, we concentrate on two examples of non-self-adjoint initial-boundary value problems. We study problem~1, described above, and also the following \\
\noindent{\bf Problem~2}
\begin{subequations} \label{eqn:problem.LKdV.Uncoupled.HL}
\begin{align}
(\partial_t+\partial_{xxx})q(x,t) &= 0, & (x,t) &\in (0,\infty)\times(0,T) \\
q(x,0) &= q_0(x), & x &\in (0,\infty) \\
q(0,t) &= 0, & t &\in (0,T).
\end{align}
\end{subequations}
The corresponding spatial differential operator is
\begin{align} \label{eqn:S.HL}
S &: \{f\in \mathcal{E}[0,\infty):f(0)=0\}\to \mathcal{S}[0,\infty) & Sf=-i\frac{\d^3 f}{\d x^3},
\end{align}
where for some arbitrarily small $\epsilon>0$
\BE
\mathcal{E}[0,\infty) = \{ f\in\mathcal{S}[0,\infty):f(x)=O(e^{-\epsilon x})\mbox{ as } x\to\infty \}
\EE
and $\mathcal{S}[0,\infty)$ is the Schwartz class of rapidly-decaying smooth functions restricted to the half-line. Our reasons for working in $\mathcal{E}[0,\infty)$ rather than $\mathcal{S}[0,\infty)$ are discussed after the proof of theorem~\ref{thm:Flambda.are.AugEig}.

Problem~1 is posed on the finite interval, and is studied in detail in~\cite{FS2013a} whereas problem~2 is posed on the half-line, the case covered by~\cite{PS2014a}. Here we study the two problems together, to draw attention to the similarities between the finite interval and half-line frameworks.

\medskip

It should be noted that problems such as these are open to analysis via a Laplace transform in time. However, as described in~\cite[appendix~C]{FP2005a}, this method presents a number of difficulties, which do not occur with the unified transform method, particularly when one studies problems of still higher order or considers certain inhomogeneous boundary conditions. The surveys~\cite{DTV2014a,FS2012a} give good accounts of the differences between the unified transform method and classical methods.

The purpose of this work is to give a spectral interpretation of the success of the unified transform method. In order to motivate and inform the discussion, we shall refer to the familiar spectral understanding of the classical Fourier methods, which comprise an analysis of the spatial part of the partial differential equation. The Laplace transform in the temporal part of the equation is not so clearly analogous to the unified method and is not discussed hereafter.

\subsection{Layout of paper}

In section~\ref{sec:UTM}, we give the solution of problems~1 and~2, as obtained through the unified transform method. In section~\ref{ssec:Transforms}, we reformulate these solution representations as a transform-inverse transform pair, whereupon the unified transform method may be seen as the method for deriving said transform pair, tailor-made for the particular IBVP of interest.

In section~\ref{sec:Spectral}, we consider the spectral meaning of the transform pair. Motivated by a discussion of Gel'fand's generalised eigenfunctions as applied to the IBVP for the self-adjoint Dirichlet heat operator, we define a new class of spectral functionals, whose properties are more suited to non-self-adjoint operators such as~\eqref{eqn:S.FI} and~\eqref{eqn:S.HL}. We discuss the characteristics of the new definition and show how the transform pair provides a new spectral diagonalisation result for these operators.

\section{Unified transform method} \label{sec:UTM}

The unified transform method~\cite{Fok2008a}, as applied to half-line initial-boundary value problems for linear evolution equations, was first described in~\cite{Fok1997a} and formalised in~\cite{FS1999a}. The method was extended to problems on the finite interval in~\cite{FP2001a} and detailed well-posedness results were established first for simple, uncoupled boundary conditions and then in full generality~\cite{Pel2004a,Smi2012a}.

In particular,~\cite{FS1999a} established rigourously that the solution of a linear initial-boundary value problem may be obtained without any assumptions beyond well-posedness of the problem. This is in contrast to classical methods, where stronger assumptions on the form of the solution are imposed. Indeed, as discussed above, the d'Alembert method of separation of variables requires that the eigenfunctions of the spatial differential operator form a complete system in the space of admissible initial data and that the expansion of the initial datum in these eigenfunctions converges. In applying the method of Fourier transforms, one implicitly assumes that the generalised eigenfunctions form a complete system in the space of initial data and that the corresponding expansion converges. This is certainly a weaker ansatz than completeness and convergence for ordinary eigenfunctions, and it is valid whenever the operator happens to be self-adjoint~\cite{GV1964a}, but it is not guaranteed for non-self-adjoint operators; formal self-adjointness is insufficient.

When the unified transform method is applied to problem~2, the solution is obtained in the form of a contour integral,
\BE \label{eqn:introIBVP.solution.HL}
q(x,t) = \frac{1}{2\pi}\int_{\Gamma^+} e^{i\lambda x + i\lambda^3t} \zeta^+(\lambda;q_0)\d\lambda + \frac{1}{2\pi}\int_{\Gamma^-} e^{i\lambda x + i\lambda^3t} \zeta^-(\lambda;q_0)\d\lambda,
\EE
where
\begin{align*}
\Gamma^+ &\mbox{ is the boundary of the domain } \{\lambda\in\C^+:\Re(-i\lambda^3)<0 \mbox{ and } |\lambda|>1\} \mbox{ as shown on figure~\ref{fig:HL-cont},} \\
\Gamma^- &\mbox{ is } \R \mbox{ perturbed away from } 0 \mbox{ along a small (radius $<\epsilon$) semicircular arc in } \C^+.
\end{align*}
for $\lambda\in\C$,
\begin{subequations} \label{eqn:introIBVP.Zeta.HL}
\begin{align} \label{eqn:introIBVP.Zetaplus.HL}
\zeta^+(\lambda;q_0) &= \alpha\hat{q}_0(\alpha\lambda) + \alpha^2\hat{q}_0(\alpha^2\lambda), \\ \label{eqn:introIBVP.Zetaminus.HL}
\zeta^-(\lambda;q_0) &= \hat{q}_0(\lambda),
\end{align}
\end{subequations}
$\alpha$ is the cube root of unity $e^{2\pi i/3}$ and $\hat{q}_0$ is the Fourier transform
\BE
\hat{q}_0(\lambda) = \int_0^\infty e^{-i\lambda x}q_0(x)\d x, \qquad \lambda\in\C: \Im(\lambda)<\epsilon.
\EE

\begin{figure}
\begin{center}
\includegraphics{HL-contours-01}
\caption{Contours for problem~2.}
\label{fig:HL-cont}
\end{center}
\end{figure}

Applying the unified transform method to problem~1, one obtains the following contour integral representation of the solution:
\BE \label{eqn:introIBVP.solution.FI}
q(x,t) = \frac{1}{2\pi}\int_{\Gamma^+} e^{i\lambda x + i\lambda^3t} \frac{\zeta^+(\lambda;q_0)}{\Delta(\lambda)}\d\lambda + \frac{1}{2\pi}\int_{\Gamma^-} e^{i\lambda(x-1) + i\lambda^3t} \frac{\zeta^-(\lambda;q_0)}{\Delta(\lambda)}\d\lambda,
\EE
where
\begin{align*}
\Gamma^\pm &\mbox{ are the boundaries of the sectors } \{\lambda\in\C^\pm:\Im(\lambda^3)>0 \mbox{ and } |\lambda|>1\} \mbox{ as shown on figure~\ref{fig:FI-cont},}
\end{align*}
for $\lambda\in\C$,
\begin{subequations} \label{eqn:introIBVP.DeltaZeta.FI}
\begin{align}
\Delta(\lambda) &= e^{-i\lambda} + \alpha e^{-i\alpha\lambda} + \alpha^2 e^{-i\alpha^2\lambda}, \\
\zeta^+(\lambda;q_0) &= \hat{q}_0(\lambda)(\alpha e^{-i\alpha\lambda}+\alpha^2 e^{-i\alpha^2\lambda}) - (\alpha\hat{q}_0(\alpha\lambda) + \alpha^2\hat{q}_0(\alpha^2\lambda))e^{-i\lambda}, \\
\zeta^-(\lambda;q_0) &= -\hat{q}_0(\lambda) - \alpha\hat{q}_0(\alpha\lambda) - \alpha^2\hat{q}_0(\alpha^2\lambda).
\end{align}
\end{subequations}
$\alpha$ is the root of unity $e^{2\pi i/3}$ and $\hat{q}_0(\lambda)$ is the Fourier transform
\BE
\hat{q}_0(\lambda) = \int_0^1 e^{-i\lambda x}q_0(x)\d x, \qquad \lambda\in\C.
\EE

\begin{figure}
\begin{center}
\includegraphics{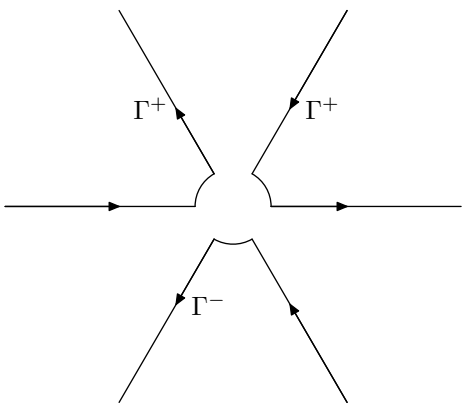}
\caption{Contours for problem~1.}
\label{fig:FI-cont}
\end{center}
\end{figure}

The solution of each problem involves contour integrals of sums of Fourier transforms of the initial data. This is in contrast to the classical Fourier series expansion one might expect from studying the finite interval Dirichlet heat problem. Indeed, many finite-interval IBVP for higher-order PDE have solutions which may be expressed as a series of (generalised) eigenfunctions in a similar manner. Having solved such a problem using the unified transform method, obtaining a solution representation similar to equation~\eqref{eqn:introIBVP.solution.FI}, one may use Jordan's lemma to perform a contour deformation, closing each component of $\Gamma^\pm$ onto the poles of the integrand. A residue calculation then yields a series representation of the solution, which corresponds to the classical eigenfunction expansion~\cite{Chi2006a,Smi2012a}. But for some odd order problems, including problem~1, this contour deformation and series representation is impossible~\cite{Pel2005a}.

\subsection{Transform-inverse transform pair} \label{ssec:Transforms}

Some problems similar to those studied here can be solved using the method of Fourier transforms. The classical Fourier sine and Fourier cosine transforms are typically used to solve the heat equation with Dirichlet and Neumann boundary conditions respectively. Indeed a different transform (and corresponding inverse transform) is required for each IBVP.

One may view the unified transform method as a way of determining an applicable transform pair for a given IBVP; we may rewrite the solution representations~\eqref{eqn:introIBVP.solution.HL} and~\eqref{eqn:introIBVP.solution.FI} in terms of Fourier-type integral transforms as
\BE
q(x,t) = f_x\left(e^{i\lambda^3t}F_\lambda(q_0)\right),
\EE
where
\begin{subequations} \label{eqn:introTrans.1.1}
\begin{align} \label{eqn:introTrans.1.1a}
f(x) &\mapsto F(\lambda): & F_\lambda(f) &= \begin{cases} \int_0^1 \phi^+(x,\lambda)f(x)\d x & \mbox{if } \lambda\in\Gamma^+, \\ \int_0^1 \phi^-(x,\lambda)f(x)\d x & \mbox{if } \lambda\in\Gamma^-, \end{cases} \\ \label{eqn:introTrans.1.1b}
F(\lambda) &\mapsto f(x): & f_x(F) &= \left\{ \int_{\Gamma^+} + \int_{\Gamma^-} \right\} e^{i\lambda x} F(\lambda) \d\lambda, \qquad x\in[0,1], \mbox{ or } x\in[0,\infty),
\end{align}
the contours $\Gamma^\pm$ are the appropriate contours for the problem, as defined above, and the kernels $\phi^\pm$ are given by
\begin{align} \label{eqn:introTrans.1.1c}
\phi^+(x,\lambda)   &= \frac{1}{2\pi\Delta(\lambda)} \left[ e^{-i\lambda x}(\alpha e^{-i\alpha\lambda}+\alpha^2 e^{-i\alpha^2\lambda}) - (\alpha e^{-i\alpha\lambda x} + \alpha^2 e^{-i\alpha^2\lambda x})e^{-i\lambda} \right], \\ \label{eqn:introTrans.1.1d}
\phi^-(x,\lambda)   &= \frac{-e^{-i\lambda}}{2\pi\Delta(\lambda)} \left[ e^{-i\lambda x} + \alpha e^{-i\alpha\lambda x} + \alpha^2 e^{-i\alpha^2\lambda x} \right]
\end{align}
for problem~1 and
\begin{align} \label{eqn:introTrans.1.1e}
\phi^+(x,\lambda)   &= \frac{1}{2\pi} \left[ \alpha e^{-i\alpha\lambda x} + \alpha^2 e^{-i\alpha^2\lambda x} \right], \\ \label{eqn:introTrans.1.1f}
\phi^-(x,\lambda)   &= \frac{1}{2\pi} e^{-i\lambda x}
\end{align}
\end{subequations}
for problem~2.

Although the above is easy to see formally, it must be established that the maps $f(x) \mapsto F(\lambda)$ and $F(\lambda) \mapsto f(x)$ are truly a transform-inverse transform pair.

\begin{prop} \label{prop:TransPairValid}
\begin{enumerate}
  \item[(i)]{Suppose $F_\lambda$ and $f_x$ are defined by equations~\eqref{eqn:introTrans.1.1a}--\eqref{eqn:introTrans.1.1d}.
	  Then for all $f\in C^\infty[0,1]$ with $f(0)=f(1)=f_x(1)=0$ and for all $x\in[0,1]$,
		$f_x(F_\lambda(f))=f(x)$.
		}
  \item[(ii)]{Suppose $F_\lambda$ and $f_x$ are defined by equations~\eqref{eqn:introTrans.1.1a},~\eqref{eqn:introTrans.1.1b},~\eqref{eqn:introTrans.1.1e} and~\eqref{eqn:introTrans.1.1f}.
	  Then for all $f\in \mathcal{E}[0,\infty)$ with $f(0)=0$ and for all $x\in[0,\infty)$,
		$f_x(F_\lambda(f))=f(x)$.
		}
\end{enumerate}
\end{prop}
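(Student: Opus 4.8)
The plan is to reduce both parts to the already-established fact that the unified transform representations \eqref{eqn:introIBVP.solution.FI} and \eqref{eqn:introIBVP.solution.HL} return the initial datum at $t=0$, and then to record a short self-contained verification in the half-line case, where the mechanism is most transparent.

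\emph{Reduction.} First I would substitute the explicit kernels \eqref{eqn:introTrans.1.1c}--\eqref{eqn:introTrans.1.1f} into the definition of $F_\lambda$ and integrate against $f$ term by term. Writing $\hat f(\mu)=\int_0^1 e^{-i\mu x}f(x)\d x$ in case~(i), this produces $F_\lambda(f)=\zeta^+(\lambda;f)/(2\pi\Delta(\lambda))$ for $\lambda\in\Gamma^+$ and $F_\lambda(f)=e^{-i\lambda}\zeta^-(\lambda;f)/(2\pi\Delta(\lambda))$ for $\lambda\in\Gamma^-$, with $\Delta$ and $\zeta^\pm$ as in \eqref{eqn:introIBVP.DeltaZeta.FI} and $q_0$ replaced by $f$; writing $\hat f(\mu)=\int_0^\infty e^{-i\mu x}f(x)\d x$ in case~(ii), it produces $F_\lambda(f)=\zeta^+(\lambda;f)/(2\pi)$ on $\Gamma^+$ and $F_\lambda(f)=\zeta^-(\lambda;f)/(2\pi)$ on $\Gamma^-$, with $\zeta^\pm$ as in \eqref{eqn:introIBVP.Zeta.HL}. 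Feeding this into $f_x$, one sees that $f_x(F_\lambda(f))$ is exactly the right-hand side of \eqref{eqn:introIBVP.solution.FI} (resp.\ \eqref{eqn:introIBVP.solution.HL}) at $t=0$ with initial datum $q_0=f$. Since the hypotheses on $f$ coincide with the smoothness and boundary-compatibility conditions under which the unified transform method is known to yield the classical solution of problem~1 (resp.\ problem~2) --- see \cite{FS2013a} (resp.\ \cite{PS2014a}), and \cite{FS1999a,FP2001a,Pel2004a,Smi2012a} for the general theory --- and such a solution satisfies $q(\,\cdot\,,0)=q_0$, this settles both parts.

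\emph{Direct verification of (ii).} To keep the half-line statement self-contained, I would argue as follows. By the reduction, $2\pi f_x(F_\lambda(f))=\int_{\Gamma^-}e^{i\lambda x}\hat f(\lambda)\d\lambda+\int_{\Gamma^+}e^{i\lambda x}\bigl(\alpha\hat f(\alpha\lambda)+\alpha^2\hat f(\alpha^2\lambda)\bigr)\d\lambda$. Since $f\in\mathcal E[0,\infty)$, the transform $\hat f$ extends analytically to $\{\Im\lambda<\epsilon\}$; since $f(0)=0$, one integration by parts gives $\hat f(\lambda)=-\lambda^{-2}\bigl(f'(0)+\widehat{f''}(\lambda)\bigr)$, hence $\hat f(\lambda)=O(|\lambda|^{-2})$ uniformly on $\{\Im\lambda\leq0\}$. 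For the $\Gamma^-$ integral, analyticity of the integrand across $\{\Im\lambda<\epsilon\}$ lets me flatten the small semicircular indentation of $\Gamma^-$ onto $\R$; then $\hat f$ is the Fourier transform of the function that equals $f$ on $[0,\infty)$ and $0$ elsewhere, which is continuous (because $f(0)=0$) with integrable transform (by the $|\lambda|^{-2}$ bound), so Fourier inversion gives $\frac1{2\pi}\int_\R e^{i\lambda x}\hat f(\lambda)\d\lambda=f(x)$ for $x\geq0$. For the $\Gamma^+$ integral, the closed sector bounded by $\Gamma^+$ has $\Im\lambda>0$, whereas the rotated arguments $\alpha\lambda$ and $\alpha^2\lambda$ lie in the closed lower half-plane there; thus the integrand is analytic on that sector and, using $|e^{i\lambda x}|\leq1$ for $x\geq0$ and $\hat f(\alpha^j\lambda)=O(|\lambda|^{-2})$, decays like $|\lambda|^{-2}$ at infinity along it, so Cauchy's theorem together with Jordan's lemma forces $\int_{\Gamma^+}=0$. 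Therefore $f_x(F_\lambda(f))=f(x)+0=f(x)$.

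\emph{Main obstacle.} The substantial work sits in the finite-interval contour analysis underlying the citation used for part~(i). After the algebraic identity $\zeta^+(\lambda;f)=\hat f(\lambda)\Delta(\lambda)+e^{-i\lambda}\zeta^-(\lambda;f)$ is used to peel off a pure $\hat f$ term and the remaining $\zeta^\pm/\Delta$ integrals are deformed toward $\R$, one must control $1/\Delta(\lambda)$ on and beyond the contours, i.e.\ show that the zeros of $\Delta$ (the eigenvalues of \eqref{eqn:S.FI}) stay clear of $\Gamma^\pm$, so that Jordan's-lemma estimates apply and the $\zeta^\pm/\Delta$ contributions cancel; this is precisely the delicate estimate carried out in \cite{FS2013a}. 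In the half-line case the only subtlety, handled above, is that the $O(|\lambda|^{-2})$ decay of $\hat f$ must hold uniformly over the whole sector swept by $\Gamma^+$, not merely along rays --- which is exactly why the single integration by parts using $f(0)=0$, together with the decay built into $\mathcal E[0,\infty)$, is what the argument needs.
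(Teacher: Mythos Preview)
Your argument is correct. For part~(ii) your direct verification is essentially the paper's own proof: show the $\Gamma^+$ integral vanishes by analyticity and $O(\lambda^{-2})$ decay of $\hat f(\alpha^j\lambda)$ in the relevant sector (via integration by parts using $f(0)=0$), flatten $\Gamma^-$ to $\R$, and invoke Fourier inversion.

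For part~(i) the approaches diverge. You reduce to the cited well-posedness literature, treating the verification of $q(\,\cdot\,,0)=q_0$ for the representation~\eqref{eqn:introIBVP.solution.FI} as a black box, and in your ``Main obstacle'' paragraph you sketch the direct argument in the order \emph{identity first, then deform and cancel the residual $\zeta^-/\Delta$ pieces}. The paper instead gives a short self-contained proof in the reverse order: it first deforms \emph{both} $\Gamma^+$ and $\Gamma^-$ onto a common contour $\gamma$ (using that $\zeta^\pm/\Delta$ decay in the sectors exterior to $\Gamma^\pm$ and that the zeros of $\Delta$ lie only inside the regions bounded by $\Gamma^\pm$, by Langer's theory of exponential polynomials), and only then applies the identity $\zeta^+-e^{-i\lambda}\zeta^-=\hat f\,\Delta$ on $\gamma$ to cancel $\Delta$ outright, leaving an entire integrand that deforms to $\R$. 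The payoff of the paper's ordering is that one never has to show two separate $\zeta^-/\Delta$ contributions cancel against each other; they simply combine algebraically before any cancellation is needed, so the argument is a few lines rather than the ``delicate estimate'' you anticipate. Your citation-based route is valid but imports more machinery than necessary; the paper's point is precisely that the inversion is elementary once the contours are brought together.
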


\begin{proof}~\\
\noindent(i)\hspace{1em}The definition of the transform pair implies
\BE \label{eqn:Transforms.valid:LKdV:prop2:proof.FI1}
f_x(F_\lambda(f)) = \frac{1}{2\pi}\left\{\int_{\Gamma^+} + \int_{\Gamma_0}\right\} e^{i\lambda x} \frac{\zeta^+(\lambda)}{\Delta(\lambda)}\d\lambda + \frac{1}{2\pi}\int_{\Gamma^-} e^{i\lambda(x-1)} \frac{\zeta^-(\lambda)}{\Delta(\lambda)}\d\lambda,
\EE
where $\zeta^\pm$ and $\Delta$ are given by equations~\eqref{eqn:introIBVP.DeltaZeta.FI} and the contours $\Gamma^+$ and $\Gamma^-$ are shown in figure~\ref{fig:FI-cont}.

\begin{figure}
\begin{center}
\includegraphics{FI-contours-03}
\caption{Contour deformation for problem~1 (finite interval).}
\label{fig:FI-contdef}
\end{center}
\end{figure}

The fastest-growing exponentials of the form $e^{\pm i\alpha^j\lambda}$ in the sectors exterior to $\Gamma^\pm$ are indicated on figure~\ref{fig:FI-contdef}a. Each of these exponentials occurs in $\Delta$ and integration by parts shows that the fastest-growing-terms in $\zeta^\pm$ are the exponentials shown on figure~\ref{fig:FI-contdef}a multiplied by $\lambda^{-2}$. Hence the ratio $\zeta^+(\lambda)/\Delta(\lambda)$ decays for large $\lambda$ within the sector $\pi/3\leq\arg\lambda\leq2\pi/3$ and the ratio $\zeta^-(\lambda)/\Delta(\lambda)$ decays for large $\lambda$ within the sectors $-\pi\leq\arg\lambda\leq-2\pi/3$, $-\pi/3\leq\arg\lambda\leq0$. The relevant integrands are meromorphic functions with poles only at the zeros of $\Delta$. The distribution theory of zeros of exponential polynomials~\cite{Lan1931a} implies that the only poles occur within the sets bounded by $\Gamma^\pm$.

The above observations and Jordan's lemma allow us to deform the relevant contours to the contour $\gamma$ shown on figure~\ref{fig:FI-contdef}b; the red arrows on figure~\ref{fig:FI-contdef}a indicate the deformation direction. Hence equation~\eqref{eqn:Transforms.valid:LKdV:prop2:proof.FI1} simplifies to
\BE \label{eqn:Transforms.valid:LKdV:prop2:proof.FI2}
f_x(F_\lambda(f)) = \frac{1}{2\pi}\int_\gamma \frac{e^{i\lambda x}}{\Delta(\lambda)}\left( \zeta^+(\lambda) - e^{-i\lambda}\zeta^-(\lambda) \right) \d\lambda.
\EE
Equations~\eqref{eqn:introIBVP.DeltaZeta.FI} imply,
\BE \label{eqn:Transforms.valid:LKdV:prop2:proof.FI3}
\left( \zeta^+(\lambda) - e^{-i\lambda}\zeta^-(\lambda) \right) = \hat{f}(\lambda)\Delta(\lambda),
\EE
where $\hat{f}$ is the Fourier transform of a piecewise smooth function supported on $[0,1]$. Hence the integrand on the right hand side of equation~\eqref{eqn:Transforms.valid:LKdV:prop2:proof.FI2} is an entire function, so we can deform the contour onto the real axis. The usual Fourier inversion theorem completes the proof.
\smallskip

\noindent(ii)\hspace{1em}The definition of the transform pair implies
\BE \label{eqn:Transforms.valid:LKdV:prop2:proof.HL1}
f_x(F_\lambda(f)) = \frac{1}{2\pi}\int_{\Gamma^+} e^{i\lambda x} \zeta^+(\lambda) \d\lambda + \frac{1}{2\pi}\int_{\Gamma^-} e^{i\lambda x} \hat{f}(\lambda) \d\lambda,
\EE
where $\zeta$ is given by equation~\eqref{eqn:introIBVP.Zetaplus.HL} and the contours $\Gamma^+$ and $\Gamma^-$ are shown in figure~\ref{fig:HL-cont}.

As $\lambda\to\infty$ from within the closed sector $\{\lambda:\frac{\pi}{3}\leq\arg(\lambda)\frac{2\pi}{3}\}$, the exponentials $e^{-i\alpha\lambda}$ and $e^{-i\alpha^2\lambda}$ are bounded. Moreover, integration by parts and the boundary condition $f(0)=0$ yields $\hat{f}(\alpha\lambda)$, $\hat{f}(\alpha^2\lambda) = O(\lambda^{-2})$. These Fourier transforms are also holomorphic in the same sector hence, by Jordan's lemma, the integral over $\Gamma^+$ vanishes. The integrand $e^{i\lambda x}\hat{f}(\lambda)$ is analytic for $\lambda$ with $\Im(\lambda)<\epsilon$ hence admits a deformation of the contour $\Gamma^-$ onto $\R$. The validity of the usual Fourier transform on $\mathcal{S}[0,\infty)$ completes the proof.
\end{proof}

\begin{thm} \label{thm:TransPairSolvesIBVP}
The transform pair $f(x) \mapsto F(\lambda)$ and $F(\lambda) \mapsto f(x)$ as defined above is the `correct' transform pair for the IBVP, in the sense that
\BE
q(x,t) = f_x\left(e^{i\lambda^3t}F_\lambda(q_0)\right).
\EE
\end{thm}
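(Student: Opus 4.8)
The plan is to verify that the candidate solution $u(x,t) := f_x(e^{i\lambda^3 t} F_\lambda(q_0))$ satisfies the PDE, the initial condition, and the boundary conditions, then invoke well-posedness of the IBVP (established in \cite{FS1999a,Pel2004a,Smi2012a}) to conclude $u = q$. Since Proposition~\ref{prop:TransPairValid} already supplies the inversion identity $f_x(F_\lambda(f)) = f(x)$, the initial condition $u(x,0) = f_x(F_\lambda(q_0)) = q_0(x)$ is immediate upon setting $t = 0$, provided $q_0$ lies in the admissible class (here the compatibility conditions on $q_0$ are exactly the domain conditions of $S$). The PDE is checked by differentiating under the integral sign: applying $\partial_t$ brings down a factor $i\lambda^3$, while applying $\partial_{xxx}$ to $e^{i\lambda x}$ brings down $(i\lambda)^3 = -i\lambda^3$, so $(\partial_t + \partial_{xxx})u = \frac{1}{2\pi}\{\int_{\Gamma^+} + \int_{\Gamma^-}\}(i\lambda^3 - i\lambda^3)e^{i\lambda x + i\lambda^3 t} F_\lambda(q_0)\,\d\lambda = 0$; one must note that the $e^{i\lambda^3 t}$ factor is bounded on $\Gamma^\pm$ for $t > 0$ (by construction of the contours, $\Re(i\lambda^3) \leq 0$ there) and that the decay estimates on $F_\lambda(q_0)$ from the proof of Proposition~\ref{prop:TransPairValid} — the $O(\lambda^{-2})$ bounds on the relevant $\hat{q}_0(\alpha^j\lambda)$ and the exponential decay supplied by $\Delta(\lambda)$ in the finite-interval case — justify differentiating under the integral and the attendant Fubini/dominated-convergence manipulations.

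Next I would verify the boundary conditions. For problem~2, one evaluates $u(0,t) = \frac{1}{2\pi}\int_{\Gamma^+} e^{i\lambda^3 t}(\alpha\hat{q}_0(\alpha\lambda) + \alpha^2\hat{q}_0(\alpha^2\lambda))\,\d\lambda + \frac{1}{2\pi}\int_{\Gamma^-} e^{i\lambda^3 t}\hat{q}_0(\lambda)\,\d\lambda$ and shows this vanishes: the $\Gamma^+$ integral can be closed (by Jordan's lemma, using the boundedness and decay just mentioned) while the $\Gamma^-$ integral — a near-real contour — combines with the closed-up $\Gamma^+$ contributions via the symmetry $\lambda \mapsto \alpha\lambda$, $\lambda\mapsto\alpha^2\lambda$ of the exponent $i\lambda^3$; a change of variables maps the three contour pieces onto a single closed loop on which the integrand is holomorphic, giving zero. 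For problem~1, the three boundary conditions $u(0,t) = u(1,t) = u_x(1,t) = 0$ are checked analogously, now exploiting the structure of $\zeta^\pm/\Delta$: the definitions \eqref{eqn:introIBVP.DeltaZeta.FI} are precisely engineered so that at $x = 0$, $x = 1$ the contour pieces reassemble (after the substitutions $\lambda\mapsto\alpha\lambda$, $\lambda\mapsto\alpha^2\lambda$ that permute the sectors) into integrals of entire functions over closed contours, which vanish. The key algebraic fact is that the numerators $\zeta^\pm$ were constructed as determinants encoding the boundary forms, so plugging in the boundary values of $x$ collapses them appropriately; this is the substance of the unified transform method's derivation and I would cite \cite{FS1999a,FP2001a} for the detailed determinant computations rather than reproduce them.

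Finally, having shown $u$ solves the same IBVP as $q$, I would appeal to the uniqueness half of well-posedness — standard for these linear constant-coefficient evolution problems with the stated boundary conditions, see \cite{FS1999a,Pel2004a} — to conclude $u(x,t) = q(x,t)$ for all $(x,t)$ in the relevant domain.

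The main obstacle I anticipate is the boundary-condition verification for problem~1: unlike the half-line case, where a single symmetry argument suffices, the finite-interval case requires carefully tracking how the contour deformations of Proposition~\ref{prop:TransPairValid}(i) interact with the evaluation at $x \in \{0,1\}$, and in particular checking that the $e^{i\lambda^3 t}$ factor does not spoil the analyticity/decay that made those deformations legitimate. One must confirm that with $t > 0$ the integrand still decays in the appropriate sectors so that Jordan's lemma applies, and that no poles of $\zeta^\pm/\Delta$ are crossed when the contours are closed — this is where the cited result on the distribution of zeros of the exponential polynomial $\Delta$ \cite{Lan1931a} is essential, guaranteeing all poles lie inside $\Gamma^\pm$. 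A secondary technical point is justifying the interchange of $\partial_{xxx}$ with the contour integral up to and including the boundary $x = 1$ (or $x = 0$), which needs the integrand and its first three $x$-derivatives to be dominated uniformly near the endpoints; the $\lambda^{-2}$ decay of the $\zeta$-numerators gives only borderline integrability after three derivatives, so one may need to integrate by parts once more in the definition of $\hat{q}_0$ — using the full set of compatibility conditions on $q_0$ — to gain the extra decay, which is the reason the admissible class of initial data is exactly $\{f \in C^\infty : f(0) = f(1) = f'(1) = 0\}$ and, on the half-line, the space $\mathcal{E}[0,\infty)$ with its mild exponential decay.
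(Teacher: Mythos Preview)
Your proposal is correct in outline, but it takes a considerably longer route than the paper. The paper's argument is a one-line notational observation: once the definitions of $F_\lambda$, $f_x$, and the kernels $\phi^\pm$ are unwound, the expression $f_x(e^{i\lambda^3 t}F_\lambda(q_0))$ is \emph{literally} the contour-integral solution formula~\eqref{eqn:introIBVP.solution.HL} or~\eqref{eqn:introIBVP.solution.FI} already produced by the unified transform method, so the theorem is nothing more than a change of notation, with the substantive analysis deferred to~\cite{Fok1997a,FP2001a}. Your route---verifying from scratch that the candidate satisfies the PDE, the initial condition, and the boundary conditions, then invoking uniqueness---is essentially the content of the direct proofs in~\cite{FS2013a,PS2014a} that the paper merely cites. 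What your approach buys is self-containment and an explicit account of which analytic features of the contours and kernels are doing the work (decay of $\zeta^\pm/\Delta$, the location of zeros of $\Delta$, the symmetry $\lambda\mapsto\alpha^j\lambda$); what the paper's approach buys is economy, since it recognises the theorem as a repackaging of an already-established result rather than a new claim requiring independent verification.
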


Theorem~\ref{thm:TransPairSolvesIBVP} follows trivially from equations~\eqref{eqn:introIBVP.solution.HL} and~\eqref{eqn:introIBVP.solution.FI} by changing notation to that of section~\ref{ssec:Transforms}. In view of proposition~\ref{prop:TransPairValid}, this theorem is simply a statement that one may solve these problems using the unified transform method, as has been established in~\cite{Fok1997a,FP2001a}. A direct proof of this theorem is given in~\cite{FS2013a,PS2014a} for problems~1 and~2, respectively.

\section{Spectral interpretation} \label{sec:Spectral}

The concept of a generalised eigenfunction, the completeness of and the convergence of expansions in these spectral objects is well-known for self-adjoint operators. Therefore it might be expected that, at least where the spatial differential operator is self-adjoint, the unified transform method might yield a solution expressed as an expansion in generalised eigenfunctions. In section~\ref{ssec:Gen.Eig} we sketch a classical argument used to justify the importance of generalised eigenfunctions for the (self-adjoint) half-line Dirichlet heat operator.

In section~\ref{ssec:Aug.Eig:Defn}, we use a similar argument to show that the concept of generalised eigenfunctions is inadequate for our purposes and propose a definition for a new class of spectral objects, the augmented eigenfunctions. Section~\ref{ssec:Aug.Eig:Discussion} provides a detailed discussion of the new definition, together with a list of known diagonalisation results for spatial differential operators similar to~\eqref{eqn:S.FI} and~\eqref{eqn:S.HL}. Section~\ref{ssec:AugEig.Applied.to.examples} contains proofs of the spectral results for the operators~\eqref{eqn:S.FI} and~\eqref{eqn:S.HL}.

\subsection{Generalised eigenfunctions} \label{ssec:Gen.Eig}

Suppose we seek traditional eigenfunctions of the spatial differential operator $S$ associated with the Dirichlet problem for the heat equation and given by
\begin{align} \label{eqn:S.HeatDHL}
S &: \{f\in \mathcal{S}[0,\infty):f(0)=0\}\to \mathcal{S}[0,\infty) & Sf=-\frac{\d^2 f}{\d x^2}
\end{align}
Then $-f''(x) = \lambda^2f(x)$ implies $f(x) = Ae^{i\lambda x} + Be^{-i\lambda x}$ for some constants $A$, $B$ and the boundary condition yields $B=-A$ hence
\BE
f(x) = A'\sin(\lambda x).
\EE
But, for $f\in \mathcal{S}[0,\infty)$, we must have $A'=0$ so there are no nonzero eigenfunctions of $S$.

Instead, following Gel'fand and coauthors~\cite{GS1967a,GV1964a}, one must consider eigen\emph{functionals} or \emph{generalised eigenfunctions}, defined as functionals $F_\lambda\in(\mathcal{S}[0,\infty))'$ for which there exists some corresponding eigenvalue $\lambda^2\in\C$ with
\BE \label{eqn:GenEig.DHeat}
F_\lambda(Sf) = \lambda^2F_\lambda(f), \qquad \hsforall f \in \mathcal{S}[0,\infty).
\EE
Searching for generalised eigenfunctions of the half-line Dirichlet heat operator, we find that the definition equation~\eqref{eqn:GenEig.DHeat} holds when $\lambda$ is any real number and
\BE \label{eqn:Flambda.FourierSine}
F_\lambda(f) = \frac{1}{2\pi}\int_0^\infty\sin(\lambda x)f(x)\d x.
\EE
It is immediate that the generalised eigenfunctions we identify are complete in the space of initial data in the sense that if $F_\lambda(f)=0$ for all $\lambda\in\R$ then $f=0$. Hence it is reasonable to expect that one may express the solution of the Dirichlet problem for the heat equation as an expansion in these spectral objects. But that is the result of the classical Fourier transform approach. Indeed, the map $f(x)\mapsto F(\lambda)$ given by equation~\eqref{eqn:Flambda.FourierSine} is precisely the Fourier sine transform.

Moreover, it is a result of~\cite{GV1964a} that for \emph{any} self-adjoint operator, the generalised eigenfunctions form a complete system, so it should be possible to represent the solution of any IBVP as an expansion in the generalised eigenfunctions of the associated spatial differential operator, provided that operator is self-adjoint. However, not all differential operators are self-adjoint. Indeed, within the class of well-posed IBVP, those with self-adjoint spatial operators are exceptional. In particular, both problems~1 and~2 have non-self-adjoint operators.

Of course, this observation does not necessarily preclude the possibilities of either of the specific non-self-adjoint operators~\eqref{eqn:S.FI} and~\eqref{eqn:S.HL} having a complete system of generalised eigenfunctions or that arbitrary expansions in the generalised eigenfunctions converge. However, the argument at the beginning of section~\ref{ssec:Aug.Eig:Defn} shows not only that the transforms $F_\lambda$ defined above are not generalised eigenfunctions but that no integral transform could be composed of generalised eigenfunctions of $S$.

\medskip

It is said that the generalised eigenfunctions of a self-adjoint operator provide a spectral representation, in the sense that they diagonalise the operator. That is, a differential operator can be expressed, via its generalised eigenfunctions, as a multiplication operator. Hence it is reasonable to ask the following questions:

\begin{enumerate}
  \item[1.]{Let $f(x)\mapsto F(\lambda)$ given by equation~\eqref{eqn:introTrans.1.1} be the transform used to solve the IBVP. Is $(F_\lambda)_{\lambda\in\Gamma^+\cup\Gamma^-}$ a system of generalised eigenfunctions of $S$? If not then do these functionals have any properties similar to spectral functions?}
  \item[2.]{Is the system $(F_\lambda)_{\lambda\in\Gamma^+\cup\Gamma^-}$ complete in the space of initial data and does the expansion of an arbitrary initial datum in the system $(F_\lambda)_{\lambda\in\Gamma^+\cup\Gamma^-}$ converge?}
  \item[3.]{Can it be said that the system $(F_\lambda)_{\lambda\in\Gamma^+\cup\Gamma^-}$ of functionals diagonalises $S$ in any sense?}
\end{enumerate}

In the following section~\ref{ssec:Aug.Eig:Defn} we will discuss these questions.

\subsection{Augmented eigenfunctions: motivation and definition} \label{ssec:Aug.Eig:Defn}

Consider now the spatial differential operator~\eqref{eqn:S.HL} associated with problem~2. Let us seek generalised eigenfunctions $E_\lambda$ of $S$ that represent integral transforms, that is generalised eigenfunctions
\BE \label{eqn:AugEig.Need.1}
E_\lambda (Sf) = \lambda^3E_\lambda (f)
\EE
of the form
\BE \label{eqn:AugEig.Need.2}
E_\lambda (f) = \int_0^\infty \phi(x,\lambda) f(x) \d x,
\EE
for some integral kernel $\phi(x,\lambda)$.

Integrating by parts on the left hand side of equation~\eqref{eqn:AugEig.Need.1}, we obtain
\begin{align*}
\phi(0,\lambda) &= 0, \\
\phi'(0,\lambda) &= 0, \\
\phi'''(x,\lambda) &= (-i\lambda)^3\phi(x,\lambda).
\end{align*}
Hence
\BES
\phi(x,\lambda) = A(\lambda)\left[ e^{-i\lambda x} + \alpha^2 e^{-i\alpha\lambda x} + \alpha e^{-i\alpha^2\lambda x} \right],
\EES
for some $A$. Note that by stressing the $\lambda$ dependence of $A$ we do not intend to suggest that $A$ is expected to be an analytic function of $\lambda$, merely that this coefficient may vary with $\lambda$. But if $A(\lambda)$ is nonzero, then the integral~\eqref{eqn:AugEig.Need.2} defining $E_\lambda$ will diverge for general $f\in\mathcal{E}[0,\infty)$. Hence $S$ has no nonzero generalised eigenfunctions of this type.

The first part of question~1 has already been answered in the negative. In the following definition, we provide an answer to the more open-ended second part of question~1.

\begin{defn} \label{defn:Aug.Eig}
Let $I$ be an open real interval and let $C$ be a space of functions defined on the closure of $I$ with sufficient smoothness and decay conditions. Let $\Phi\subseteq C$ and let $L:\Phi\to C$ be a linear operator. Let $\gamma$ be an oriented contour in $\C$ and let $E=\{E_\lambda:\lambda\in\gamma\}$ be a family of functionals $E_\lambda\in C'$. Suppose there exist corresponding \emph{remainder} functionals $R_\lambda\in C'$ and \emph{eigenvalues} $z(\lambda)$ such that
\BE \label{eqn:defnAugEig.AugEig}
E_\lambda(L\phi) = z(\lambda) E_\lambda(\phi) + R_\lambda(\phi), \qquad\hsforall\phi\in\Phi, \hsforall \lambda\in\gamma.
\EE

If
\BE \label{eqn:defnAugEig.Control1}
\int_\gamma e^{i\lambda x} R_\lambda(\phi)\d\lambda = 0, \qquad \hsforall \phi\in\Phi, \hsforall x\in I,
\EE
then we say $E$ is a family of \emph{type~\textup{I} augmented eigenfunctions} of $L$ up to integration along $\gamma$.

If
\BE \label{eqn:defnAugEig.Control2}
\int_\gamma \frac{e^{i\lambda x}}{z(\lambda)}  R_\lambda(\phi)\d\lambda = 0, \qquad \hsforall \phi \in\Phi, \hsforall x\in I, 
\EE
then we say $E$ is a family of \emph{type \textup{II}~augmented eigenfunctions} of $L$ up to integration along $\gamma$.
\end{defn}

In claiming this definition provides an answer to question 1, we are not only claiming theorem~\ref{thm:Flambda.are.AugEig} (our $F_\lambda$ are augmented eigenfunctions of the relevant spatial differential operators) but also that augmented eigenfunctions can be called `spectral objects' in some reasonable sense; see section~\ref{ssec:Aug.Eig:Discussion}.

The answer to question~2 is positive. Indeed, evaluating the result of theorem~\ref{thm:TransPairSolvesIBVP} at $t=0$ immediately yields the desired completeness and convergence results. Question~3 is answered in section~\ref{ssec:Aug.Eig:Discussion} with three senses of diagonalisation, in equations~\eqref{eqn:AugEig.DiagI},~\eqref{eqn:AugEig.Just.2-2} and~\eqref{eqn:Spect.Rep.both}.

\subsection{Augmented Eigenfunctions: discourse on the definition} \label{ssec:Aug.Eig:Discussion}
In the earlier discussion of the generalised eigenfunctions~\eqref{eqn:Flambda.FourierSine} of the half-line Dirichlet heat operator~\eqref{eqn:S.HeatDHL}, we noted that these spectral objects represent the Fourier sine transform. This is the transform tailor-made to solve the initial-boundary value problem, because
\begin{enumerate}
  \item[(i)]{the transform is \emph{defined} on the space of admissible initial data}
  \item[(ii)]{the transformed initial datum (often called the spectral data) has a particularly \emph{simple time evolution}}
  \item[(iii)]{the transform is \emph{invertible} on the relevant space so the solution at a later time can be reconstructed from the time-evolved spectral data}
\end{enumerate}
At least for the purposes of solving initial-boundary value problems, it is these properties of generalised eigenfunctions that are essential.

Property~(ii) follows from the equation~\eqref{eqn:GenEig.DHeat}, usually taken to be the definition of a generalised eigenfunction. Indeed, taking spatial Fourier sine transforms of the heat equation,
\BE
q_t(x,t) = \left(\frac{\partial^2q}{\partial x^2}\right)(x,t)
\EE
one obtains
\BE
\hat{q}_t(\lambda,t) = \widehat{\left(\frac{\partial^2q}{\partial x^2}\right)}(\lambda,t).
\EE
Now equation~\eqref{eqn:GenEig.DHeat} allows us to rewrite the right hand side as $-\lambda^2\hat{q}(\lambda,t)$ which specifies a simple ordinary differential equation for the time evolution of the spectral data. The solution is
\BE
\hat{q}(\lambda,t) = e^{-\lambda^2t}\hat{q}(\lambda,0),
\EE
which is property~(ii). Hence we consider property~(ii) as a consequence of the deeper property
\begin{enumerate}
  \item[(ii$'$)]{the transform \emph{maps} the spatial differential operator \emph{to a multiplication operator} on the transformed (spectral) data}
\end{enumerate}
In solving an initial-boundary value problem, properties~(i) and~(ii$'$) would be of little use without a method for reconstructing the solution from the time-evolved spectral data, that is property~(iii). \emph{The central idea of augmented eigenfunctions is that a suitable strengthening of property~\textup{(iii)} permits a weakening of property~\textup{(ii$'$)} to the point of tautology.}
Modeled on these observations, we now discuss the definition of augmented eigenfunctions of each type.

Suppose $E=\{E_\lambda:\lambda\in\gamma\}$ is a family of type~\textup{I} augmented eigenfunctions of $L$ up to integration along $\gamma$. Then equation~\eqref{eqn:defnAugEig.AugEig} implies
\BE \label{eqn:AugEig.Just.1-1}
\int_\gamma e^{i\lambda x}E_\lambda(L\phi)\d\lambda = \int_\gamma e^{i\lambda x}z(\lambda) E_\lambda(\phi)\d\lambda + \int_\gamma e^{i\lambda x} R_\lambda(\phi)\d\lambda.
\EE
But equation~\eqref{eqn:defnAugEig.Control1} guarantees that the second integral on the right hand side of equation~\eqref{eqn:AugEig.Just.1-1} evaluates to zero. Hence
\BE \label{eqn:AugEig.DiagI}
\int_\gamma e^{i\lambda x}E_\lambda(L\phi)\d\lambda = \int_\gamma e^{i\lambda x}z(\lambda) E_\lambda(\phi)\d\lambda
\EE
and it appears formally that the type~I augmented eigenfunctions describe an integral transform which diagonalises $L$ and $\int_\gamma e^{i\lambda x}\cdot\d\lambda$ represents the corresponding inverse transform.

In the definition of type~I augmented eigenfunctions, property~(ii$'$) has been reduced merely to the definition~\eqref{eqn:defnAugEig.AugEig} of the remainder functionals, whereas property~(iii) has been strengthened; not only is $\int_\gamma e^{i\lambda x}\cdot\d\lambda$ an inverse transform but, according to equation~\eqref{eqn:defnAugEig.Control1}, when it is applied to the remainder functionals (here considered as a function of $\lambda\in\C$, with $\phi$ a parameter) it must return $0$.

There are some additional conditions required in order to make this rigourous, not least the convergence of the remaining two integrals in equation~\eqref{eqn:AugEig.Just.1-1} and an argument that this is a genuine transform pair, but this is no different from the situation for generalised eigenfunctions. Indeed, the guarantee that generalised eigenfunctions can diagonalise an operator~\cite{GV1964a} only holds for a certain class of operators: self-adjoint operators. However, a theorem of~\cite{FS2013a} combined with results in~\cite{Smi2011a,Smi2012a} shows that
\begin{itemize}
  \item[(a)]{Each operator $S$ of the form
    \begin{subequations} \label{eqn:S.GeneralFI}
    \begin{align} \label{eqn:S.GeneralFI.a}
    S &: \{f\in C^\infty[0,1]:n\mbox{ linearly independent boundary conditions}\}\to C^\infty[0,1] \\ \label{eqn:S.GeneralFI.b}
    Sf &= \left(\frac{-i\d}{\d x}\right)^nf,
    \end{align}
    \end{subequations}
    for which $n$ is even and an associated IBVP is well-posed, belongs to the class of operators which can be diagonalised by a complete family of their type~I augmented eigenfunctions.}
\end{itemize}

Now suppose instead that $E=\{E_\lambda:\lambda\in\gamma\}$ is a family of type~\textup{II} augmented eigenfunctions of $L$ up to integration along $\gamma$. Then, by equation~\eqref{eqn:defnAugEig.AugEig},
\BE \label{eqn:AugEig.Just.2-1}
\int_\gamma e^{i\lambda x}\frac{1}{z(\lambda)}E_\lambda(L\phi)\d\lambda = \int_\gamma e^{i\lambda x} E_\lambda(\phi)\d\lambda + \int_\gamma e^{i\lambda x}\frac{1}{z(\lambda)} R_\lambda(\phi)\d\lambda
\EE
and equation~\eqref{eqn:defnAugEig.Control2} ensures the final integral vanishes;
\BE \label{eqn:AugEig.Just.2-2}
\int_\gamma e^{i\lambda x}\frac{1}{z(\lambda)}E_\lambda(L\phi)\d\lambda = \int_\gamma e^{i\lambda x} E_\lambda(\phi)\d\lambda.
\EE
So formally the type~II augmented eigenfunctions specify an integral transform and $\int_\gamma e^{i\lambda x}\cdot\d\lambda$ provides an inverse transform, as in the case of type~I augmented eigenfunctions. But this time the transform pair does not diagonalise $S$ in the expected way. Instead, equation~\eqref{eqn:AugEig.Just.2-2} \emph{represents $S$ with diagonalised inverse}.

Type~\textup{II} augmented eigenfunctions have the same weakening~\eqref{eqn:defnAugEig.AugEig} of property~(ii$'$) as type~\textup{I} augmented eigenfunctions. However the strengthening of property~(iii), equation~\eqref{eqn:defnAugEig.Control2}, now insists that the inverse transform of the \emph{ratio} of the remainder functional to the eigenvalue evaluates to $0$. It is clear how the form of diagonalisation~\eqref{eqn:AugEig.Just.2-2} arises from this.

Again, there is much in this formal discussion to be justified rigourously. In particular, in investigating the convergence of the relevant integrals it becomes clear why this division by the eigenvalue is necessary. Indeed, for the operators we study here, certain families of type~II augmented eigenfunctions fail to be type~I augmented eigenfunctions precisely because the convergence of the integral~\eqref{eqn:defnAugEig.Control1} fails. Nevertheless, the rigourous arguments may be constructed for several classes of operators, listed below.

\begin{thm} \label{thm:Diag.Egs}
The operators defined by~\eqref{eqn:S.FI} or~\eqref{eqn:S.HL} can be diagonalised in the sense of equation~\eqref{eqn:AugEig.Just.2-2} by complete systems of their respective type~II augmented eigenfunctions.
\end{thm}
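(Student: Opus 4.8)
The plan is to establish, for each of the two operators $S$ of~\eqref{eqn:S.FI} and~\eqref{eqn:S.HL}, the three things Theorem~\ref{thm:Diag.Egs} asks for: that the family $E=\{F_\lambda:\lambda\in\Gamma^+\cup\Gamma^-\}$ built from the transform kernels~\eqref{eqn:introTrans.1.1c}--\eqref{eqn:introTrans.1.1f} is a family of type~II augmented eigenfunctions of $S$ in the sense of Definition~\ref{defn:Aug.Eig}; that this family is complete in the space of admissible initial data; and that~\eqref{eqn:AugEig.Just.2-2} then follows. Throughout write $\gamma=\Gamma^+\cup\Gamma^-$ and take $\Phi$ to be the domain of $S$.

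First I would derive the augmented-eigenfunction relation~\eqref{eqn:defnAugEig.AugEig}. Fix $f\in\Phi$ and integrate $F_\lambda(Sf)=\int\phi^\pm(x,\lambda)\bigl(-if'''(x)\bigr)\d x$ by parts three times, moving all derivatives onto the kernel. Two facts do the work: each kernel $\phi^\pm(\cdot,\lambda)$ is a linear combination of exponentials $e^{-i\omega\lambda x}$ with $\omega^3=1$, so $\phi^\pm_{xxx}=(-i\lambda)^3\phi^\pm$ as a function of $x$; and $f$ obeys exactly the boundary conditions defining $\Phi$ (and, on the half-line, decays at infinity). The interior term returns $z(\lambda)F_\lambda(f)$, where $z(\lambda)$ is the Fourier symbol of $S$ (explicitly $z(\lambda)=-\lambda^{3}$), the opposite sign to the kernel's eigenvalue being forced by the odd order of $S$, while the boundary terms not annihilated by the conditions on $f$ assemble into an explicit \emph{remainder}
\BES
R_\lambda(f)=-i\Bigl[\phi^\pm(x,\lambda)f''(x)-\phi^\pm_x(x,\lambda)f'(x)+\phi^\pm_{xx}(x,\lambda)f(x)\Bigr]_{\partial I}.
\EES
Concretely $R_\lambda(f)$ is a finite linear combination of the surviving boundary values of $f$ --- $f'(0)$ and $f''(0)$ for problem~2; $f'(0)$, $f''(0)$, $f''(1)$ for problem~1 --- with coefficients that are, respectively, polynomials in $\lambda$ and ratios of exponential polynomials over $\Delta(\lambda)$.

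The heart of the argument is the verification of~\eqref{eqn:defnAugEig.Control2}, namely $\int_\gamma e^{i\lambda x}R_\lambda(f)/z(\lambda)\,\d\lambda=0$ for all $x\in I$. Here the division by $z(\lambda)$ is essential: the coefficients in $R_\lambda(f)$ grow like $\lambda$ or $\lambda^2$ along the relevant directions, so~\eqref{eqn:defnAugEig.Control1} would diverge --- which is exactly why these families are of type~II and not type~I --- whereas division by $\lambda^3$ restores convergence (note $z$ is nonvanishing on $\gamma$; the small semicircular detour of $\Gamma^-$ about the origin in problem~2 is designed precisely to permit this). With convergence secured I would kill the integral by contour deformation, reusing the machinery of the proof of Proposition~\ref{prop:TransPairValid}: for problem~2 the terms are $e^{i\lambda x}\lambda^{-k}$ with $k\in\{2,3\}$ integrated over $\Gamma^+$ and over $\Gamma^-$, and since $e^{i\lambda x}$ decays in $\C^+$ for $x>0$ while the only pole, at $\lambda=0$, lies outside the regions these contours bound, closing each end at infinity makes every such integral vanish; for problem~1 one deforms $\Gamma^\pm$ onto a contour hugging the real axis --- the zeros of $\Delta$ lying inside the sectors bounded by $\Gamma^\pm$ by the theory of zeros of exponential polynomials, and the relevant ratios decaying in those sectors --- whereupon the $\Delta$ in the denominator cancels, leaving an entire integrand whose integral over $\R$ vanishes.

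Completeness of $E$ in the initial-data space is then immediate from Proposition~\ref{prop:TransPairValid}: the identity $f_x(F_\lambda(f))=f(x)$ exhibits $\int_\gamma e^{i\lambda x}\cdot\d\lambda$ as a genuine inverse transform, so $F_\lambda(f)=0$ for all $\lambda\in\gamma$ forces $f=0$. Finally, dividing the relation of the first step by $z(\lambda)$, multiplying by $e^{i\lambda x}$ and integrating along $\gamma$ gives
\BES
\int_\gamma \frac{e^{i\lambda x}}{z(\lambda)}F_\lambda(Sf)\,\d\lambda=\int_\gamma e^{i\lambda x}F_\lambda(f)\,\d\lambda+\int_\gamma \frac{e^{i\lambda x}}{z(\lambda)}R_\lambda(f)\,\d\lambda,
\EES
whose first right-hand term is $f(x)$ by Proposition~\ref{prop:TransPairValid} and whose second vanishes by the previous step, so the left-hand integral converges and we obtain~\eqref{eqn:AugEig.Just.2-2}. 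The main obstacle is the contour-deformation step: writing $R_\lambda(f)$ down is routine, but the book-keeping --- confirming that the factor $1/z(\lambda)$ supplies exactly the decay needed, tracking the zeros of $\Delta$ and the sectorial growth of the exponential-polynomial coefficients in problem~1, and correctly placing the pole at the origin relative to $\Gamma^-$ in problem~2 --- is delicate, though it duplicates estimates already carried out for Proposition~\ref{prop:TransPairValid}. A secondary point is to check that $Sf$ is an admissible argument for $F_\lambda$ and for the inverse transform (on the half-line this is where working in $\mathcal{E}[0,\infty)$ rather than $\mathcal{S}[0,\infty)$ matters), so that all three displayed integrals are genuinely convergent.
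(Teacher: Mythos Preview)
Your outline follows the paper's strategy almost exactly: the paper packages the integration-by-parts computation and the verification of~\eqref{eqn:defnAugEig.Control2} into a separate statement (Theorem~\ref{thm:Flambda.are.AugEig}), then invokes it together with the $t=0$ case of Theorem~\ref{thm:TransPairSolvesIBVP} (equivalently Proposition~\ref{prop:TransPairValid}) to obtain completeness and convergence of the integrals in~\eqref{eqn:AugEig.Just.2-2}. So structurally you are doing the same thing.

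There is, however, one concrete miscalculation in your plan for problem~1. You anticipate that the remainder coefficients will be ``ratios of exponential polynomials over $\Delta(\lambda)$'' and therefore propose to deform $\Gamma^\pm$ onto a contour near $\R$ and then argue a cancellation. In fact the $\Delta(\lambda)$ in the kernels~\eqref{eqn:introTrans.1.1c}--\eqref{eqn:introTrans.1.1d} cancels \emph{identically} when you evaluate the boundary terms: one finds, for instance, $\phi^+(0,\lambda)=1/(2\pi)$, $\phi^+_x(0,\lambda)=-i\lambda/(2\pi)$, $\phi^+(1,\lambda)=0$, $\phi^-(0,\lambda)=\phi^-_x(0,\lambda)=0$, $\phi^-(1,\lambda)=-e^{-i\lambda}/(2\pi)$, so the remainders are simply polynomial (times at worst $e^{-i\lambda}$) in $\lambda$, with no $\Delta$ anywhere. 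The paper then kills $\int_{\Gamma^\pm}e^{i\lambda x}R^\pm_\lambda(f)/\lambda^3\,\d\lambda$ by closing each contour at infinity via Jordan's lemma, using only that the integrand is $O(\lambda^{-2})$ and holomorphic in the sector to the left of $\Gamma^\pm$ (recall $|\lambda|>1$ on $\Gamma^\pm$, so the pole of $1/\lambda^3$ is excluded). Your proposed deformation of $\Gamma^+$ toward $\R$ would instead cross the pole at $\lambda=0$ and pick up a nonzero residue, and there is no mechanism by which ``the integral over $\R$ vanishes'' for an integrand like $e^{i\lambda x}/\lambda^2$. So that step, as written, does not go through; replacing it with the close-at-infinity argument fixes it and in fact simplifies the whole paragraph. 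Your treatment of problem~2 is already the paper's argument.
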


The proof of this theorem appears in section~\ref{ssec:AugEig.Applied.to.examples}. This result is a special case of the more general results in~\cite{FS2013a,PS2014a} summarised below:
\begin{itemize}
  \item[(b)]{If $S$ is the operator~\eqref{eqn:S.GeneralFI} for any $n\geq2$, the boundary conditions include $f(0)=0=f(1)$ and an associated IBVP is well-posed, then $S$ can be diagonalised in the sense of equation~\eqref{eqn:AugEig.Just.2-2} by a complete system of its type~II augmented eigenfunctions}
  \item[(c)]{If $S$ is the operator
	  \begin{subequations} \label{eqn:S.GeneralHL}
    \begin{align} \label{eqn:S.GeneralHL.a}
    S &: \{f\in \mathcal{E}[0,\infty):N\mbox{ linearly independent boundary conditions}\}\to \mathcal{S}[0,\infty) \\ \label{eqn:S.GeneralHL.b}
    Sf &= \left(\frac{-i\d}{\d x}\right)^nf,
    \end{align}
    \end{subequations}
    for any $n\geq2$ and an associated IBVP is well-posed, then $S$ can be diagonalised in the sense of equation~\eqref{eqn:AugEig.Just.2-2} by a complete system of its type~II augmented eigenfunctions}
\end{itemize}

It is not always possible to find a complete system of either type~I or type~II augmented eigenfunctions. However, for a wide class of operators $L$ it is possible to find a complete system $\{E_\lambda:\lambda\in\gamma^{(\mathrm{I})}\cup\gamma^{(\mathrm{II})}\}$ of functionals where some, $\{E_\lambda:\lambda\in\gamma^{(\mathrm{I})}\}$, are type~{I} augmented eigenfunctions and the rest, $\{E_\lambda:\lambda\in\gamma^{(\mathrm{II})}\}$, are type~II augmented eigenfunctions. In such a situation, $L$ is diagonalised in the sense that
\begin{subequations} \label{eqn:Spect.Rep.both}
\begin{align} \label{eqn:Spect.Rep.both.1}
\int_{\gamma^{(\mathrm{I})}} e^{i\lambda x} E_\lambda L\phi \d\lambda &= \int_{\gamma^{(\mathrm{I})}} e^{i\lambda x} z(\lambda) E_\lambda \phi \d\lambda & \hsforall \phi &\in\Phi, \hsforall x\in I, \\ \label{eqn:Spect.Rep.both.2}
\int_{\gamma^{(\mathrm{II})}} \frac{1}{z(\lambda)} e^{i\lambda x} E_\lambda L\phi \d\lambda &= \int_{\gamma^{(\mathrm{II})}} e^{i\lambda x} E_\lambda \phi \d\lambda & \hsforall \phi &\in\Phi, \hsforall x\in I.
\end{align}
\end{subequations}
This includes the result for the general two-point differential operator with monomial symbol:
\begin{itemize}
  \item[(d)]{If $S$ is the operator~\eqref{eqn:S.GeneralFI} for any $n\geq2$ with a well-posed associated initial-boundary value problem, then $S$ can be diagonalised in the sense of~\eqref{eqn:Spect.Rep.both} by a complete system of type~I and type~II augmented eigenfunctions.}
\end{itemize}

\subsection{Augmented eigenfunctions: application to problems~1 and~2} \label{ssec:AugEig.Applied.to.examples}

\begin{thm} \label{thm:Flambda.are.AugEig}
\begin{enumerate}
  \item[(i)]{Suppose $S$ is defined by equation~\eqref{eqn:S.FI}, $\Gamma^\pm$ are the contours shown on figure~\ref{fig:FI-cont} and the functionals $F_\lambda$ are defined by equations~\eqref{eqn:introTrans.1.1a},~\eqref{eqn:introTrans.1.1c},~\eqref{eqn:introTrans.1.1d}. Then $\{F^+_\lambda:\lambda\in\Gamma^+\}\cup\{F^-_\lambda:\lambda\in\Gamma^-\}$ are type~II augmented eigenfunctions of $S$ up to integration along $\Gamma^+\cup\Gamma^-$, with eigenvalues $\lambda^3$.
		}
  \item[(ii)]{Suppose $S$ is defined by equation~\eqref{eqn:S.HL}, $\Gamma^\pm$ are the contours shown on figure~\ref{fig:HL-cont} and the functionals $F_\lambda$ are defined by equations~\eqref{eqn:introTrans.1.1a},~\eqref{eqn:introTrans.1.1e},~\eqref{eqn:introTrans.1.1f}. Then $\{F^+_\lambda:\lambda\in\Gamma^+\}\cup\{F^-_\lambda:\lambda\in\Gamma^-\}$ are type~II augmented eigenfunctions of $S$ up to integration along $\Gamma^+\cup\Gamma^-$, with eigenvalues $\lambda^3$.
		}
\end{enumerate}
\end{thm}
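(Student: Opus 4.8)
The plan is to verify the defining identity~\eqref{eqn:defnAugEig.AugEig} of a type~II augmented eigenfunction directly, by integration by parts, and then to identify the remainder functionals $R_\lambda$ explicitly and check that they satisfy the control condition~\eqref{eqn:defnAugEig.Control2}. In both parts the candidate eigenvalue is $z(\lambda)=\lambda^3$, so I must show that for each $\phi$ in the relevant domain $\Phi$ (namely $C^\infty[0,1]$ with $\phi(0)=\phi(1)=\phi'(1)=0$ in case~(i), or $\mathcal{E}[0,\infty)$ with $\phi(0)=0$ in case~(ii)) there is a functional $R_\lambda$ with
\BE
F_\lambda(S\phi) = \lambda^3 F_\lambda(\phi) + R_\lambda(\phi), \qquad \hsforall\lambda\in\Gamma^+\cup\Gamma^-,
\EE
and that $\int_{\Gamma^+\cup\Gamma^-} \lambda^{-3} e^{i\lambda x} R_\lambda(\phi)\d\lambda = 0$ for all $x$ in the interval.

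The first step is the by-parts computation. Writing $F_\lambda(\phi)=\int \phi^\pm(x,\lambda)\phi(x)\d x$ with the kernels~\eqref{eqn:introTrans.1.1c}--\eqref{eqn:introTrans.1.1f}, I apply $S=-i\,\d^3/\d x^3$ inside the integral and integrate by parts three times. Each kernel $\phi^\pm(\cdot,\lambda)$ is (up to the $\lambda$-dependent prefactors $1/2\pi\Delta(\lambda)$, $-e^{-i\lambda}/2\pi\Delta(\lambda)$, or $1/2\pi$) a linear combination of the exponentials $e^{-i\lambda x}$, $e^{-i\alpha\lambda x}$, $e^{-i\alpha^2\lambda x}$, and since $\alpha^3=1$ each of these is a formal eigenfunction of $-i\,\d^3/\d x^3$ with eigenvalue $(-i\cdot(-i\lambda))^{?}$ — more precisely $-i\,\d^3/\d x^3\, e^{-i\mu x} = -i(-i\mu)^3 e^{-i\mu x}=\mu^3 e^{-i\mu x}$, and $(\alpha^j\lambda)^3=\lambda^3$ for every $j$. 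Hence the "bulk" term produced by integration by parts is exactly $\lambda^3 F_\lambda(\phi)$, and $R_\lambda(\phi)$ is precisely the collection of boundary terms at the endpoints. In case~(ii) the boundary terms at $x=0$ involving $\phi(0)$ vanish by the boundary condition, while the terms at $\infty$ vanish by the decay in $\mathcal{E}[0,\infty)$ (this is the point at which working in $\mathcal{E}$ rather than $\mathcal{S}$ is convenient); what survives is a finite sum of terms of the form (fixed exponential in $\lambda$) times $\phi'(0)$ and $\phi''(0)$, each divided by $\Delta(\lambda)$ or not as appropriate. In case~(i) one gets boundary contributions at both $x=0$ and $x=1$; the conditions $\phi(0)=\phi(1)=\phi'(1)=0$ kill several of them, leaving terms in $\phi'(0)$, $\phi''(0)$, $\phi''(1)$ with explicit $\lambda$-dependent coefficients, all carrying the prefactor $1/\Delta(\lambda)$.

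The second, and main, step is to verify the vanishing condition~\eqref{eqn:defnAugEig.Control2}. Here I expect the key structural input to be the contour geometry and the decay estimates already used in the proof of Proposition~\ref{prop:TransPairValid}: on $\Gamma^+$ and $\Gamma^-$ the relevant exponentials $e^{-i\alpha^j\lambda x}$ appearing in $R_\lambda$ are bounded (for $x$ in the interval) in the sectors exterior to the contours, and after division by $z(\lambda)=\lambda^3$ and by $\Delta(\lambda)$ the integrand $\lambda^{-3}e^{i\lambda x}R_\lambda(\phi)$ is holomorphic in those sectors and decays like $|\lambda|^{-3}$ or faster. Then Jordan's lemma lets me collapse each contour component: either the integrand is entire in the region enclosed (so the integral over the closed contour is zero and the arcs at infinity contribute nothing), or its only singularities are the zeros of $\Delta$, which by the exponential-polynomial zero-distribution result of~\cite{Lan1931a} lie inside $\Gamma^\pm$ — but the pieces of $R_\lambda$ that carry the $1/\Delta$ factor are, by the algebraic identity analogous to~\eqref{eqn:Transforms.valid:LKdV:prop2:proof.FI3}, actually of the form (entire function)$\cdot\Delta(\lambda)/\Delta(\lambda)$ once assembled, so the poles cancel. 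In short, the same deformation-to-the-real-axis argument that made the transform pair work in Proposition~\ref{prop:TransPairValid} is what forces the remainder integral to vanish; indeed one should be able to extract~\eqref{eqn:defnAugEig.Control2} as essentially a restatement of the computation in that proof, reorganized around $R_\lambda$. The main obstacle is therefore bookkeeping: correctly matching each boundary term in $R_\lambda$ to the piece of the contour on which it decays, and checking that the $1/\Delta(\lambda)$ factors recombine into something analytic across $\Gamma^+\cup\Gamma^-$ so that no residues are left over. I would treat case~(ii) first, where there is no $\Delta$ and the argument is cleanest, and then case~(i) as the same argument with the extra endpoint $x=1$ and the extra care needed for the zeros of $\Delta$.
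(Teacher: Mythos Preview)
Your approach is correct and is essentially the paper's: integrate by parts to produce $F_\lambda^\pm(Sf)=\lambda^3F_\lambda^\pm(f)+R_\lambda^\pm(f)$, then use Jordan's lemma on $e^{i\lambda x}R_\lambda^\pm(f)/\lambda^3$ over each $\Gamma^\pm$ to verify~\eqref{eqn:defnAugEig.Control2}.

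You do, however, overcomplicate case~(i). When the kernels $\phi^\pm(\cdot,\lambda)$ and their $x$-derivatives are evaluated at the endpoints, the exponential sums that arise are precisely multiples of $\Delta(\lambda)$, so the $1/\Delta(\lambda)$ prefactor cancels outright: the paper finds
\[
R_\lambda^+(f) = -\tfrac{i}{2\pi}f''(0) - \tfrac{\lambda}{2\pi}f'(0), \qquad
R_\lambda^-(f) = -\tfrac{ie^{-i\lambda}}{2\pi}f''(1),
\]
which are entire in $\lambda$. Hence there are no poles of $\Delta$ to track, no need for an identity analogous to~\eqref{eqn:Transforms.valid:LKdV:prop2:proof.FI3}, and Jordan's lemma applies immediately to each $\Gamma^\pm$ separately. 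Your ``bookkeeping obstacle'' largely evaporates once you actually carry out the boundary evaluations.

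One small correction on case~(ii): the reason for working in $\mathcal{E}[0,\infty)$ rather than $\mathcal{S}[0,\infty)$ is \emph{not} the vanishing of the boundary terms at infinity in the integration by parts (Schwartz decay already gives that). It is that $\Gamma^-$ must be perturbed into $\C^+$ near $0$ so that the pole of $R_\lambda^-/\lambda^3$ at $\lambda=0$ lies outside the region onto which the contour is closed; the forward transform $F_\lambda^-$ is then only defined on $\mathcal{E}[0,\infty)$, as the paper remarks immediately after the proof.
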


\begin{proof}~\\
\noindent(i)\hspace{1em}Integration by parts yields
\BE
F_\lambda^\pm(Sf) = \lambda^3 F_\lambda^\pm(f) + R_\lambda^\pm(f), \qquad \hsforall \lambda\in\Gamma^\pm, \hsforall f\in\mathcal{D}(S),
\EE
where
\begin{align}
R_\lambda^+(f) &= -\frac{i}{2\pi}f''(0) - \frac{\lambda}{2\pi}f'(0), \\
R_\lambda^-(f) &= -e^{-i\lambda}\frac{i}{2\pi}f''(1).
\end{align}
That is, equation~\eqref{eqn:defnAugEig.AugEig} holds with eigenvalues $\lambda^3$ and remainder functionals entire in $\lambda$. Moreover, $R_\lambda^\pm/\lambda^3$ are meromorphic functions of $\lambda$, each with no pole in the sectors lying to the left of $\Gamma^\pm$. As $R_\lambda^+$ is decaying like $O(\lambda^{-2})$ as $\lambda\to\infty$, Jordan's lemma yields
\BE
\int_{\Gamma^+} e^{i\lambda x}\frac{R_\lambda^+(f)}{\lambda^3}\d\lambda = 0.
\EE
Similarly,
\BE
\int_{\Gamma^-} e^{i\lambda x}\frac{R_\lambda^-(f)}{\lambda^3}\d\lambda = \int_{\Gamma^-} e^{-i\lambda(1-x)}\left[-\frac{i}{2\pi\lambda^3}f''(1)\right]\d\lambda = 0.
\EE

\noindent(ii)\hspace{1em}Integration by parts yields
\BE
F_\lambda^\pm(Sf) = \lambda^3 F_\lambda^\pm(f) + R_\lambda^\pm(f), \qquad \hsforall \lambda\in\Gamma^\pm, \hsforall f\in\mathcal{D}(S),
\EE
where
\begin{align}
R_\lambda^+(f) &= \frac{i}{2\pi}f''(0) - \frac{\lambda}{2\pi}f'(0), \\
R_\lambda^-(f) &= -\frac{i}{2\pi}f''(0) + \frac{\lambda}{2\pi}f'(0).
\end{align}
That is, equation~\eqref{eqn:defnAugEig.AugEig} holds with eigenvalues $\lambda^3$ and remainder functionals entire in $\lambda$. Moreover, $R_\lambda^\pm/\lambda^3$ are meromorphic functions, each with no pole to the left of $\Gamma^\pm$, and $O(\lambda^{-2})$ decay as $\lambda\to\infty$. Hence, by Jordan's lemma,
\BE \label{eqn:proof.Flambda.are.AugEig:prob2}
\int_{\Gamma^\pm} e^{i\lambda x}\frac{R_\lambda^\pm(f)}{\lambda^3}\d\lambda = 0.
\EE
\end{proof}

The reason we work in $\mathcal{E}[0,\infty)$ for problem~2 now becomes clear. Suppose that we wish to work in the full Schwartz space $\mathcal{S}[0,\infty)$. Then our forward transform is not defined on all of $\Gamma^-$ (see equations~\eqref{eqn:introTrans.1.1a} and~\eqref{eqn:introTrans.1.1f} and figure~\ref{fig:HL-cont}) so we must replace $\Gamma^-$ with some new contour, $\gamma^-$ say, which lies wholly in the closure of the lower half-plane. But then equation~\eqref{eqn:proof.Flambda.are.AugEig:prob2} fails, as we pick up a contribution from the pole at zero.

\begin{proof}[Proof of theorem~\ref{thm:Diag.Egs}]
We give the proof for the operator~\eqref{eqn:S.FI}. The proof for the operator~\eqref{eqn:S.HL} is similar.

Theorem~\ref{thm:Flambda.are.AugEig}(i) gives the family of type~II augmented eigenfunctions. Moreover, these augmented eigenfunctions describe the forward transform used in theorem~\ref{thm:TransPairSolvesIBVP}. Evaluating that result at $t=0$, we obtain completeness of the system of functionals.

The integrals
\BE
\int_{\Gamma^\pm} e^{i\lambda x}F_\lambda^\pm f\d\lambda
\EE
converge as they are the integrals appearing in theorem~\ref{thm:TransPairSolvesIBVP}, for $t=0$. Thus the integrals in equation~\eqref{eqn:AugEig.Just.2-2} converge.
\end{proof}

\subsubsection*{Acknowledgement}
The research leading to these results has received funding from the European Union's Seventh Framework Programme (FP7-REGPOT-2009-1) under grant agreement n$^\circ$ 245749.


\begin{thebibliography}{10}

\bibitem{Chi2006a}
D.~Chilton, \emph{An alternative approach to two-point boundary value problems
  for linear evolution {PDE}s and applications}, Phd, University of Reading,
  2006.

\bibitem{DTV2014a}
B.~Deconinck, T.~Trogdon, and V.~Vasan, \emph{The method of {F}okas for solving
  linear partial differential equations}, (to appear), 2014.

\bibitem{Fok1997a}
A.~S. Fokas, \emph{A unified transform method for solving linear and certain
  nonlinear {PDE}s}, Proc. R. Soc. Lond. Ser. A Math. Phys. Eng. Sci.
  \textbf{453} (1997), 1411--1443.

\bibitem{Fok2008a}
A.~S. Fokas, \emph{A unified approach to boundary value problems}, CBMS-SIAM, 2008.

\bibitem{FP2001a}
A.~S. Fokas and B.~Pelloni, \emph{Two-point boundary value problems for linear
  evolution equations}, Math. Proc. Cambridge Philos. Soc. \textbf{131} (2001),
  521--543.

\bibitem{FP2005a}
A.~S. Fokas and B.~Pelloni, \emph{A transform method for linear evolution {PDE}s on a finite
  interval}, IMA J. Appl. Math. \textbf{70} (2005), 564--587.

\bibitem{FS2013a}
A.~S. Fokas and D.~A. Smith, \emph{Evolution {P}{D}{E}s and augmented
  eigenfunctions. {I} finite interval}, (submitted), 2013.

\bibitem{FS2012a}
A.~S. Fokas and E.~A. Spence, \emph{Synthesis, as opposed to separation, of
  variables}, SIAM Rev. \textbf{54} (2012), no.~2, 291--324.

\bibitem{FS1999a}
A.~S. Fokas and L.~Y. Sung, \emph{Initial-boundary value problems for linear
  dispersive evolution equations on the half-line}, (unpublished), 1999.

\bibitem{GS1967a}
I.~M. Gel'fand and G.~E. Shilov, \emph{Generalized functions volume 3: theory
  of differential equations}, Academic Press, 1967, Trans. M. E. Mayer from
  Russian (1958).

\bibitem{GV1964a}
I.~M. Gel'fand and N.~Ya. Vilenkin, \emph{Generalized functions volume 4:
  applications of harmonic analysis}, Academic Press, 1964, Trans. A. Feinstein
  from Russian (1961).

\bibitem{Hop1919a}
J.~W. Hopkins, \emph{Some convergent developments associated with irregular
  boundary conditions}, Trans. Amer. Math. Soc. \textbf{20} (1919), 245--259.

\bibitem{Jac1915a}
D.~Jackson, \emph{Expansion problems with irregular boundary conditions}, Proc.
  Amer. Acad. Arts Sci. \textbf{51} (1915), no.~7, 383--417.

\bibitem{Lan1931a}
E.~Langer, \emph{The zeros of exponential sums and integrals}, Bull. Amer.
  Math. Soc. \textbf{37} (1931), 213--239.

\bibitem{Pel2004a}
B.~Pelloni, \emph{Well-posed boundary value problems for linear evolution
  equations on a finite interval}, Math. Proc. Cambridge Philos. Soc.
  \textbf{136} (2004), 361--382.

\bibitem{Pel2005a}
B.~Pelloni, \emph{The spectral representation of two-point boundary-value problems
  for third-order linear evolution partial differential equations}, Proc. R.
  Soc. Lond. Ser. A Math. Phys. Eng. Sci. \textbf{461} (2005), 2965--2984.

\bibitem{PS2014a}
B.~Pelloni and D.~A. Smith, \emph{Evolution {P}{D}{E}s and augmented
  eigenfunctions. {II} half line}, (submitted), 2014.

\bibitem{Shk1976a}
A.~A. Shkalikov, \emph{The completeness of eigenfunctions and associated
  functions of an ordinary differential operator with irregular-separated
  boundary conditions}, Funct. Anal. Appl. \textbf{10} (1976), 305--316.

\bibitem{Smi2011a}
D.~A. Smith, \emph{Spectral theory of ordinary and partial linear differential
  operators on finite intervals}, Phd, University of Reading, 2011.

\bibitem{Smi2012a}
D.~A. Smith, \emph{Well-posed two-point initial-boundary value problems with
  arbitrary boundary conditions}, Math. Proc. Cambridge Philos. Soc.
  \textbf{152} (2012), 473--496.

\end{thebibliography}
\end{document}